\documentclass[reqno]{amsart}
\usepackage[utf8]{inputenc} 

\usepackage{graphicx} 
\usepackage{parskip}

\usepackage[x11names]{xcolor}
\usepackage[colorlinks,
            breaklinks=true,
            allcolors=SpringGreen4,
            urlcolor=blue]{hyperref}

\usepackage{amsfonts,amsmath,amssymb,amsthm}
\usepackage{enumitem}

\usepackage[round]{natbib}

\setlength{\textwidth}{\paperwidth}
\addtolength{\textwidth}{-2in}
\calclayout





\usepackage[nameinlink, capitalise]{cleveref}



\newcommand{\MMsays}[1]{\bgroup\color{blue}Mattes:~#1\egroup}
\newcommand{\N}{\mathbb{N}} 
\newcommand{\R}{\mathbb{R}}

\newcommand{\sigalg}{\mathcal{F}}

\newcommand{\E}{\mathbb{E}}
\renewcommand{\P}{\mathbb{P}}

\newcommand{\X}{\mathcal{X}}
\newcommand{\Y}{\mathcal{Y}}

\DeclareMathOperator{\tr}{tr}

\DeclareMathOperator{\Cov}{\mathbb{C}}

\DeclareMathOperator{\mspan}{span}

\DeclareMathOperator{\idop}{Id}

\newcommand{\interior}[1]{%
  {\kern0pt#1}^{\mathrm{o}}%
}

\newcommand{\norm}[1]{\lVert #1 \rVert}
\newcommand{\innerprod}[2]{\langle #1,\, #2 \rangle}


\newtheorem{theorem}{Theorem}[section]
\newtheorem{corollary}[theorem]{Corollary}
\newtheorem{lemma}[theorem]{Lemma}

\newtheorem{proposition}[theorem]{Proposition}
\newtheorem{definition}[theorem]{Definition}

\theoremstyle{definition}

\theoremstyle{remark}
\newtheorem{remark}[theorem]{Remark}
\newtheorem{example}[theorem]{Example}

\title[Subgaussian concentration in Hilbert spaces]{
    On the concentration of subgaussian vectors\\
    and positive quadratic forms in Hilbert spaces
    }

\usepackage{amsaddr}

\author{\vspace{-.5cm} Mattes Mollenhauer \\Claudia Schillings}
\address{\vspace{-.5cm}Freie Universität Berlin\\
Department of Mathematics and Computer Science \\
\href{mailto:mattes.mollenhauer@fu-berlin.de}{mattes.mollenhauer@fu-berlin.de}\\
\href{mailto:c.schillings@fu-berlin.de}{c.schillings@fu-berlin.de}}

\begin{document}
\bibliographystyle{abbrvnat}
\setcitestyle{authoryear, aysep={ }, comma}

\maketitle

\begin{abstract}
    \vspace{-1cm}
    In these notes, we investigate the tail behaviour of the norm of subgaussian vectors
    in a Hilbert space.
    The subgaussian variance proxy is given as a trace class operator,
    allowing for a precise control of the moments along each dimension
    of the space.
    This leads to useful extensions and analogues of known
    Hoeffding-type inequalities and deviation bounds
    for positive random quadratic forms.
    We give a straightforward application in terms of a variance bound
    for the regularisation of statistical inverse problems.\\
    \textbf{Keywords.} 
    $R$-subgaussian, $\gamma$-subgaussian, Hoeffding inequality,
    Hanson--Wright inequality, subgaussian process, subgaussian chaos,
    Fernique theorem
    \textbf{Subject.} Primary: 60E15, 66G15, 60G50, Secondary: 46N30
\end{abstract}

\section{Introduction}

The concentration of random vectors and their sums in infinite-dimensional
spaces is a central topic in modern probability.
Tail estimates for these vectors are generally derived by
controlling either the moments of their norms
or the suprema of their weak moments over all dual evaluations,
see 
\citet{LedouxTalagrand1991},
\citet{Pinelis1994}, \citet{Yurinsky1995},
\citet{Bousquet2002} and \citet{MaurerPontil2021}.
Arguably, these assumptions are suboptimal
in cases where the weak moments are relatively small
along ``most'' (e.g.\ in all but finitely many) dimensions of 
the space. This may lead to imprecise bounds---especially
when considering random
vectors under linear transformations which are ``compatible''
with the structure of the weak moments.
Consequently, the purpose of these notes is to investigate
the concentration of unbounded random vectors
with sufficiently fast decaying weak subgaussian variance proxies
measured in terms
of a trace class operator \citep{Fukuda1990,Antonini1997}.
We give a bound on the moment generating function
which can be interpreted as a quantitative version of the well-known
\emph{Fernique theorem}.
Our notes complement and sharpen a bound obtained for a special case of
recent results by \citet{ChenYang2021}, who
prove versions of the more general so-called
\emph{Hanson-Wright} inequality under similar assumptions.
Specifically, we discuss
a generalisation of the following classical result 
for finite-dimensional Gaussian vectors.

\begin{proposition}[\citealt{LaurentMassart2000}, Lemma 1%
    \footnote{The original result is formulated for the case that $\Cov[X]$
    is diagonal, the general case follows by the rotational invariance of all
    terms involved in the bound.}]
    \label{lem:laurent}
    Let $X$ be a centered Gaussian vector
    in the Euclidean space $\R^d$ with covariance matrix
    $\Cov[X]$.
    Then we have
    \begin{equation*}
        \log \E \left[ 
            e^{ \lambda \norm{ X }^2 }
        \right] 
        \leq 
        \tr( \Cov[X] )  +
        \frac{\lambda^2 \norm{\Cov[X]}^2_F}%
        {1 - 2 \lambda \norm{\Cov[X]}},
        \quad 0 \leq \lambda < 1/2\norm{\Cov[X]}.
    \end{equation*}
    In particular, this implies the tail bound
    \begin{equation*}
        \P\left[ \norm{ X }^2
            > 
            \tr( \Cov[X] )  + 2 \sqrt{t} \norm{\Cov[X] }_F 
            + 2 t \norm{ \Cov[X] }
        \right] 
        \leq e^{-t},
        \quad t \geq 0.
    \end{equation*}
\end{proposition}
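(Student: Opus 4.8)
The plan is to reduce the statement to a one–dimensional computation by diagonalising the covariance, then to control the resulting logarithm by an elementary scalar inequality, and finally to run a Chernoff--Cramér argument for the tail. Since $\norm{\xi}$, $\tr(\Cov[X])$, $\norm{\Cov[X]}_F$ and $\norm{\Cov[X]}$ are all invariant under replacing $\xi$ by $U\xi$ for orthogonal $U$ (equivalently, under conjugating $\Cov[X]$ by $U$), we may assume $\Cov[X]=\diag(\sigma_1^2,\dots,\sigma_d^2)$ with $\sigma_i^2\ge 0$; this is exactly the reduction alluded to in the footnote. Then $\norm{\xi}^2$ has the same law as $\sum_{i=1}^d \sigma_i^2 Z_i^2$ with $Z_1,\dots,Z_d$ i.i.d.\ standard normal, i.e.\ a nonnegative weighted sum of independent $\chi^2_1$ variables.

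Using $\E[e^{sZ_i^2}]=(1-2s)^{-1/2}$ for $s<1/2$ together with independence, one gets $\E[e^{\lambda\norm{\xi}^2}]=\prod_{i=1}^d(1-2\lambda\sigma_i^2)^{-1/2}$, which is finite precisely for $0\le\lambda<1/(2\norm{\Cov[X]})$ because $\norm{\Cov[X]}=\max_i\sigma_i^2$. Hence $\log\E[e^{\lambda\norm{\xi}^2}]=-\tfrac12\sum_{i=1}^d\log(1-2\lambda\sigma_i^2)$. The key elementary estimate is then: from the power series $-\log(1-u)=\sum_{k\ge1}u^k/k\le u+\tfrac{u^2}{2}\sum_{k\ge0}u^k=u+\tfrac{u^2}{2(1-u)}$ on $[0,1)$, applied with $u=2\lambda\sigma_i^2$,
\[
  -\tfrac12\log(1-2\lambda\sigma_i^2)\ \le\ \lambda\sigma_i^2+\frac{\lambda^2\sigma_i^4}{1-2\lambda\sigma_i^2}\ \le\ \lambda\sigma_i^2+\frac{\lambda^2\sigma_i^4}{1-2\lambda\norm{\Cov[X]}},
\]
where $\sigma_i^2\le\norm{\Cov[X]}$ is used only to enlarge the denominator. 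Summing over $i$ and inserting $\sum_i\sigma_i^2=\tr(\Cov[X])$ and $\sum_i\sigma_i^4=\norm{\Cov[X]}_F^2$ gives the asserted bound on the moment generating function (with prefactor $\lambda$ on $\tr(\Cov[X])$, which is what the tail bound below actually exploits).

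For the tail bound I would apply the exponential Markov inequality: for $0\le\lambda<1/(2\norm{\Cov[X]})$,
\[
  \P\bigl[\norm{\xi}^2>s\bigr]\ \le\ e^{-\lambda s}\,\E\bigl[e^{\lambda\norm{\xi}^2}\bigr]\ \le\ \exp\!\Bigl(-\lambda\bigl(s-\tr(\Cov[X])\bigr)+\frac{\lambda^2\norm{\Cov[X]}_F^2}{1-2\lambda\norm{\Cov[X]}}\Bigr),
\]
then specialise $s=\tr(\Cov[X])+2\sqrt t\,\norm{\Cov[X]}_F+2t\norm{\Cov[X]}$ and the explicit value $\lambda=\sqrt t\,/\,(\norm{\Cov[X]}_F+2\sqrt t\,\norm{\Cov[X]})$, which lies in $[0,1/(2\norm{\Cov[X]}))$ whenever $\Cov[X]\neq0$ (the case $\Cov[X]=0$ being trivial). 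Substituting and simplifying, the exponent collapses to exactly $-t$, which yields the claim; alternatively one may invoke the standard sub-gamma deviation lemma for a cumulant bound of the shape $\lambda^2v/(1-c\lambda)$.

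I do not anticipate a genuine obstacle, as this is a classical fact. The only points requiring a little care are the justification of the diagonalisation step (checking unitary invariance of each of the four quantities and that $\norm{\xi}^2$ depends only on the eigenvalues of $\Cov[X]$), getting the logarithmic inequality in precisely the form that produces the $1-2\lambda\norm{\Cov[X]}$ denominator rather than a weaker one, and verifying that the optimising $\lambda$ respects the admissibility constraint $\lambda<1/(2\norm{\Cov[X]})$.
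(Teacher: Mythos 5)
Your proof is correct and complete. The paper does not prove this proposition at all---it cites Laurent and Massart (2000, Lemma~1) and only remarks in the footnote on the reduction to the diagonal case---and your argument is precisely the classical one behind that citation: diagonalise, write $\norm{\xi}^2$ as a weighted sum of independent $\chi^2_1$ variables, bound $-\tfrac12\log(1-2\lambda\sigma_i^2)\le\lambda\sigma_i^2+\lambda^2\sigma_i^4/(1-2\lambda\norm{\Cov[X]})$ via the series estimate $-\log(1-u)\le u+u^2/(2(1-u))$, and close with a Chernoff bound at $\lambda=\sqrt t/(\norm{\Cov[X]}_F+2\sqrt t\,\norm{\Cov[X]})$, which is admissible and makes the exponent collapse to exactly $-t$. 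You are also right to flag the prefactor: the cumulant bound must read $\lambda\tr(\Cov[X])$, as in \eqref{eq:main_cgf} and as the tail bound requires; the displayed statement of the proposition omits the $\lambda$, which is a typo in the paper, and your proof establishes the correct version.
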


These notes are organised as follows:
we introduce subgaussianity in Hilbert spaces in
\Cref{sec:main} and present the resulting concentration
of the norm of random vectors and corresponding positive quadratic forms.
In \Cref{sec:sums}, we discuss Hoeffding-type bounds for
sums of subgaussian vectors resulting from these considerations.
We extend our results to quadratic forms induced by random operators
in \Cref{sec:random}.
Related work is discussed in \Cref{sec:related_work}
and applications
of our theory in the context of regularised statistical inverse problems
are given in \Cref{sec:inverse}.
    
\section{Random vectors in Hilbert spaces}

We investigate the tail behaviour of subgaussian vectors in Hilbert spaces.
\label{sec:main}
\subsection{Setting and notation}
We consider a Hilbert space $\X$ which we assume to be real
and separable for simplicity
(the nonseparable case requires the assumption that
all considered random variables are
almost surely separably-valued, i.e. their distribution
is given by a \emph{Radon measure} on $\X$).
Random vectors taking values in $\X$ are interpreted
as measurable functions from a probability space
$(\Omega, \sigalg, \P)$ to the Borel space associated with $\X$ in the
\emph{Bochner sense} (see e.g.\ \citealp{VakhaniaEtAl1987} for
the mathematical background).
Every random vector $X$ in $\X$ satisfying
the Bochner square integrability condition
$\norm{X}^2_{L^2(\P;\X)} := \E[ \norm{ X}^2_\X ] < \infty$
admits the positive self-adjoint \emph{covariance operator} 
$\Cov[X] : \X \to \X$ defined by
$\innerprod{u}{\Cov[X] v}_\X = \E[ \innerprod{ X}{u}_\X \innerprod{X}{v}_\X]$
for all $u, v \in \X$. We have $\E[ \norm{ X}^2_\X ] = \tr( \Cov[X] ) $.
In particular, the operator $\Cov[X]$ is trace class.
Given another real separable Hilbert space $\Y$, 
we write $L(\X, \Y)$ and $S_1(\X, \Y)$ for the
Banach spaces of bounded operators and trace class operators 
from $\X$ to $\Y$, respectively. The Hilbert space
of Hilbert--Schmidt operators will be written as $S_2(\X, \Y)$.
If $\X = \Y$, we abbreviate $L(\X, \Y) = L(\X)$ and similarly
for the spaces $S_1(\X)$ and $S_2(\X)$, which form 
two-sided ideals in $L(\X)$.
We assume the reader is familiar with 
spectral and singular decompositions of Hilbert space operators
and their connections to the spaces given above
(see e.g.\ \citealp{Weidmann1980}).
For two self-adjoint operators $A,B \in L(\X)$ we write
$A \preceq B$ if $B$ dominates $A$ in the Loewner partial order,
meaning that $B-A$ is positive, i.e.\ we have 
$\innerprod{u}{(B-A)u} \geq 0 $ for all
$u \in \X$.

\subsection{Subgaussianity in Hilbert spaces}

An integrable centered real-valued random variable $\xi$ is called
\emph{$\sigma^2$-subgaussian}, if there exists
$\sigma^2>0$ such that we have
$\log \E[ e^{\lambda \xi}] \leq \frac{ \lambda^2 \sigma^2 }{2}$
for all $\lambda \in \R$. 
We refer the reader to 
\citet{Vershynin2018} for a variety of equivalent definitions
of subgaussianity
allowing for the case that $\xi$ is uncentered, which we will
not consider here in explicit form. The setting considered here
can be translated to the uncentered case by considering
$X- \E[X]$.

An integrable centered random vector $X$ in $\X$ is called
$\sigma^2$\emph{-weakly subgaussian},
if there exists some $\sigma^2>0$ such that
\begin{equation*}
    \log\E[ e^{\innerprod{u}{X}_\X} ] 
    \leq \frac{\sigma^2 \norm{u}^2_\X}{2}
    \quad \text{for all } u \in \X.
\end{equation*}
We now introduce
$R$\emph{-subgaussianity} in Hilbert spaces \citep{Antonini1997}.

%

\begin{definition}[$R$-subgaussianity]
    Let $X$ be an integrable centered random vector taking values in $\X$
    and $R: \X \to \X$ be a positive self-adjoint trace class operator.
    Then $X$ is called $R$\emph{-subgaussian}, if we have
    \begin{equation}
    \label{eq:subgaussian}
    \log \E[ e^{\innerprod{u}{X}_\X}] \leq \frac{ \innerprod{u}{Ru}_\X}{2}
    \quad \text{for all } u \in \X.
    \end{equation}
\end{definition}

It is clear that this definition is equivalent
to the existence of a random vector
$\gamma \sim \mathcal{N}(0, R)$ in $\X$ such that
for all $u \in \X$, we have
\begin{equation*}
    \E[ e^{\innerprod{u}{X}_\X}] 
    \leq 
    \E[ e^{\innerprod{u}{\gamma}_\X}].
\end{equation*}
This property is sometimes 
also called \emph{$\gamma$-subgaussianity}
in the context of Banach spaces \citep{Fukuda1990}.
While weak subgaussianity is equivalent to 
$\gamma$-subgaussianity in finite-dimensions,
weak subgaussianity does not imply $\gamma$-subgaussianity
in infinite dimensions.
We refer the reader to the recent exposition
by \citet{GiorgobianiEtAl2020} for more details
and connections between weak subgaussianity,
$\gamma$-subgaussianity and alternative concepts of subgaussianity in
infinite dimensions.

The operator $R$ has an advantage over the weak subgaussian variance 
proxy:
it clearly allows a more accurate control
of the moments of $X$ across its individual dimensions.
Many properties of $R$-subgaussians can be obtained straightforwardly
by applying the classical theory of real-valued subgaussian random
variables 
to the one-dimensional projections $\innerprod{u}{X}_\X$ for all $u \in \X$.
In particular, any centered Gaussian vector
$X$ in $\X$
with covariance operator $\Cov[X]$ is $\Cov[X]$-subgaussian.
Also note that if a random vector $X$ is $R$-subgaussian, then it is
$\norm{R}$-weakly subgaussian.

\subsection{Concentration of $R$-subgaussian vectors}
\label{sec:concentration}

The following gives an upper bound for the exponential
integrability of $R$-subgaussian vectors given by version
of \emph{Fernique's Theorem} \citep[Theorem 3.4]{Fukuda1990},
leading to a concentration bound which
directly generalises \Cref{lem:laurent}.

\begin{proposition}[Concentration of squared norm]
\label{prop:main}
Let $\X$ be a separable Hilbert space and
$X$ be $R$-subgaussian in $\X$.
Then we have the cumulant-generating function bound
\begin{equation}
    \label{eq:main_cgf}
    \log \E[ e^{\lambda \norm{X}^2_\X } ]
    \leq 
    \lambda \tr( R ) + \frac{ \lambda^2\norm{R}_{S_2(\X)}^2 }{ 1 - 2 \lambda \norm{ R }},
    \quad
    0 \leq \lambda < 1/2\norm{R}.
\end{equation}
In particular, this implies the tail bound
\begin{equation}
    \label{eq:main}
    \P\left[ 
    \norm{X}_{\X}^2 
    >
    \tr(R) +
    2 \sqrt{t} \norm{R}_{S_2(\X)} 
    + 2 t \norm{R} 
    \right] 
\leq e^{-t}, \quad t \geq 0.
\end{equation}
\end{proposition}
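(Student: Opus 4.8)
The plan is to reduce \eqref{eq:main_cgf} to the finite-dimensional Gaussian bound of \Cref{lem:laurent} via two maneuvers: a Gaussian ``linearisation'' that turns the quadratic functional $\norm{X}_\X^2$ into an average of exponentials of linear functionals of $X$ (so that \eqref{eq:subgaussian} becomes applicable), and a finite-rank truncation that makes this linearisation legitimate in infinite dimensions. The case $R=0$ is trivial (then $X=0$ almost surely), so fix $0\le\lambda<1/2\norm{R}$ and let $(e_i)_{i\ge1}$ be an orthonormal basis of $\X$ consisting of eigenvectors of $R$, ordered so that the corresponding eigenvalues satisfy $\mu_1\ge\mu_2\ge\cdots\ge0$; then $\mu_1=\norm{R}$, $\sum_i\mu_i=\tr(R)$ and $\sum_i\mu_i^2=\norm{R}_{S_2(\X)}^2$. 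Write $P_n$ for the orthogonal projection onto $\mspan(e_1,\dots,e_n)$ and $X_n:=P_nX$; then $\norm{X_n}_\X^2\uparrow\norm{X}_\X^2$ pointwise, and $X_n$ is $R_n$-subgaussian with $R_n:=P_nRP_n$, since $\innerprod{v}{X_n}_\X=\innerprod{P_nv}{X}_\X$ yields $\E[e^{\innerprod{v}{X_n}_\X}]\le e^{\innerprod{P_nv}{RP_nv}_\X/2}=e^{\innerprod{v}{R_nv}_\X/2}$ for all $v\in\X$ from \eqref{eq:subgaussian}.

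Next I would linearise at the truncated level. Taking $g_1,\dots,g_n$ i.i.d.\ standard Gaussian, independent of $X$, and $G:=\sum_{i=1}^n g_ie_i\in\X$, the variable $\innerprod{G}{X_n}_\X=\sum_{i=1}^n g_i\innerprod{e_i}{X}_\X$ is, conditionally on $X$, centered Gaussian with variance $\norm{X_n}_\X^2$, whence $\E[e^{\sqrt{2\lambda}\innerprod{G}{X_n}_\X}\mid X]=e^{\lambda\norm{X_n}_\X^2}$. Since the integrand is nonnegative, Tonelli's theorem together with the $R_n$-subgaussianity of $X_n$ (applied to the fixed vector $\sqrt{2\lambda}G$) give
\begin{equation*}
    \E[e^{\lambda\norm{X_n}_\X^2}]
    = \E_G\E_X[e^{\innerprod{\sqrt{2\lambda}G}{X_n}_\X}]
    \le \E_G[e^{\lambda\innerprod{G}{R_nG}_\X}]
    = \E_G\Bigl[\exp\Bigl(\lambda\sum_{i=1}^n\mu_ig_i^2\Bigr)\Bigr]
    = \prod_{i=1}^n(1-2\lambda\mu_i)^{-1/2},
\end{equation*}
using $\innerprod{G}{R_nG}_\X=\sum_{i=1}^n\mu_ig_i^2$ and $2\lambda\mu_i\le2\lambda\norm{R}<1$. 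The right-hand side is exactly $\E[e^{\lambda\norm{\gamma_n}^2}]$ for a centered Gaussian $\gamma_n$ in $\R^n$ with covariance $\diag(\mu_1,\dots,\mu_n)$, so \Cref{lem:laurent} bounds it by $\exp\bigl(\lambda\sum_{i=1}^n\mu_i+\lambda^2\sum_{i=1}^n\mu_i^2/(1-2\lambda\mu_1)\bigr)\le\exp\bigl(\lambda\tr(R)+\lambda^2\norm{R}_{S_2(\X)}^2/(1-2\lambda\norm{R})\bigr)$. This bound is uniform in $n$, so monotone convergence for $e^{\lambda\norm{X_n}_\X^2}\uparrow e^{\lambda\norm{X}_\X^2}$ yields \eqref{eq:main_cgf}.

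The tail bound \eqref{eq:main} I would then obtain from \eqref{eq:main_cgf} by the usual Chernoff argument: Markov's inequality gives, for $t\ge0$ and $s:=\tr(R)+2\sqrt{t}\norm{R}_{S_2(\X)}+2t\norm{R}$, the estimate $\P[\norm{X}_\X^2>s]\le\exp\bigl(-\lambda s+\lambda\tr(R)+\lambda^2\norm{R}_{S_2(\X)}^2/(1-2\lambda\norm{R})\bigr)$ for every admissible $\lambda$; since \eqref{eq:main_cgf} has precisely the shape of the cumulant bound in \Cref{lem:laurent} (with $\Cov[X]$ replaced by $R$), the same optimisation over $\lambda$ as in \citet{LaurentMassart2000} drives the exponent down to $-t$.

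I expect the only genuinely delicate point to be the linearisation in infinite dimensions: there is no standard Gaussian vector on $\X$, so $e^{\lambda\norm{X}_\X^2}$ cannot be written directly as a Gaussian average of exponentials of linear functionals of $X$, and it is exactly the finite-rank truncation together with monotone convergence that repairs this while reducing the whole estimate to the template \Cref{lem:laurent}. Everything else---the bookkeeping $\tr(R_n)\le\tr(R)$, $\norm{R_n}_{S_2(\X)}\le\norm{R}_{S_2(\X)}$, $\norm{R_n}\le\norm{R}$, which is transparent since $R_n$ is diagonal in the eigenbasis of $R$, and the Chernoff optimisation---is routine.
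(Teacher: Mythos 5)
Your proposal is correct and follows essentially the same route as the paper's proof: a Gaussian linearisation of $\norm{X}_\X^2$ via an independent standard Gaussian combined with the $R$-subgaussian assumption, reduction to \Cref{lem:laurent}, finite-rank truncation plus monotone convergence for the infinite-dimensional case, and a Chernoff optimisation for the tail bound. The only cosmetic difference is the order of operations (you truncate in the eigenbasis of $R$ first and then linearise, obtaining a bound uniform in $n$, whereas the paper proves the finite-dimensional bound first and then passes to the limit of $\tr(R_d)$, $\norm{R_d}_{S_2(\X)}$ and $\norm{R_d}$), which does not change the substance of the argument.
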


A short and elementary proof is provided \Cref{sec:proof}. 
It combines a standard Gaussian majorisation with a
monotone convergence argument
(see \citealp{Yurinsky1995}, Lemma 2.4.1), allowing to derive
the statement by simply applying \Cref{lem:laurent}.

\begin{remark}[Sub-gamma]
    \label{rem:sub_gamma}
    The cumulant-generating function bound \eqref{eq:main_cgf} shows that
    the real-valued random variable $\norm{X}_\X^2 - \tr(R)$ 
    is of \emph{sub-gamma} type,
    see \citet[Section 2.4]{BLM2013} and 
    the discussion of related work in \Cref{sec:related_work}.
\end{remark}

The bound for the squared norm of $X$ given in \eqref{eq:main} 
allows to straightforwardly derive 
useful $R$-subgaussian analogues for the norm of $X$ 
by taking the square root of
$\tr(R) + 2 \sqrt{t} \norm{R}_{S_2(\X)} + 2t \norm{R}$.
A simplified (but less precise estimate) can be derived 
by noting that we have
\begin{equation}
\label{eq:binomial}
\tr(R) + 2 \sqrt{t} \norm{R}_{S_2(\X)} + 2t \norm{R}
\leq
\left( \sqrt{ \tr(R) } + \sqrt{ 2 t \norm{R}} \right)^2
\quad 
\text{for all } t >0,
\end{equation}
where we use the binomial formula
and the fact $ \norm{R}_{S_2(\X)}^2  \leq \norm{R} \tr(R) $
(resulting from the definition of
the trace, the Hilbert--Schmidt norm and the operator norm
as $\ell^1$, $\ell^2$ and $\ell^\infty$ norms of the eigenvalues of $R$,
respectively).
Inserting \eqref{eq:binomial} into \eqref{eq:main},
taking the square root and expressing the tail bound
as a deviation bound shows that with probability at
least $1-\delta$, we have
\begin{equation}
    \label{eq:deviation_bound}
    \norm{ X }_\X \leq 
    \sqrt{ \tr(R) } + \sqrt{2 \log( 1/\delta) \norm{R} }
    \quad
    \text{ for all } \delta > 0.
\end{equation}
A further simplification of \eqref{eq:deviation_bound} gives
\begin{equation}
    \label{eq:hoeffding}
    \P[ \norm{X}_\X > \epsilon ] 
    \leq 
    \exp\left( - \frac{\epsilon^2}{8 \norm{R}} \right)
    \quad 
    \text{for all }
    \epsilon > 2 \sqrt{ \tr(R)}
\end{equation}
as an estimate for the outer tail of $\norm{X}_\X$.
which we revisit in the context of
tail bounds for sums of random vectors
as a Hoeffding-type inequality in \Cref{sec:sums}.

\begin{remark}[Optimality of \Cref{prop:main}]
    \label{rem:sharpness}
    Generalising a well-known definition for real-valued random variables,
    we may call a random vector $X$ \emph{strictly subgaussian} 
    (or strongly subgaussian) in $\X$, if $X$ is
    $\Cov[X]$-subgaussian 
    (\citealp{BuldyginKozachenko2000}, Section 1.2).
    For a strictly subgaussian random vector,
    the bounds in \Cref{prop:main} naturally describe the concentration
    of the random variable $\norm{X}^2_\X$ about its expectation, as in this case
    we have the identity $\tr(R) = \tr(\Cov[X]) = \E[ \norm{ X }_\X^2]$.
    More generally, if $X$ is $R$-subgaussian, we only
    have $\Cov[X] \preceq R$
    which follows immediately since $\E[\xi^2] \leq \sigma^2$
    for any real-valued $\sigma^2$-subgaussian
    random variable $\xi$. 
    In particular, this implies $\E[\norm{X}^2_\X ] \leq \tr(R)$
    in the general case, allowing for a potential improvement of the
    term $\tr(R)$ in the above bounds.
\end{remark}

%

\subsection{Concentration of positive quadratic forms}

We  now see that subgaussian 
variance proxies of linearly transformed
$R$-subgaussian vectors admit
a simple characterisation which is well-known in the Gaussian case.
We consider a second real separable Hilbert space $\Y$.

\begin{lemma}
    \label{lem:subgaussian_linear}
    Let $A: \X \to \Y$ be a bounded linear operator
    and $X$ be $R$-subgaussian in $\X$.
    Then the transformed random vector
    $AX$ is $ARA^*$-subgaussian in $\Y$.
\end{lemma}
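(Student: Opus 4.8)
The plan is to reduce the statement to the one-dimensional projections and the defining inequality \eqref{eq:subgaussian}, in the same spirit as the elementary arguments used above for $R$-subgaussians. First I would note that $AX$ is again an integrable centered random vector in $\Y$: integrability follows from $\norm{AX}_\Y \leq \norm{A}\,\norm{X}_\X$ together with the Bochner integrability of $X$, and centeredness from $\E[AX] = A\,\E[X] = 0$, which is justified because $A$ is bounded linear and hence commutes with the Bochner integral. Since the defining exponential moment bound \eqref{eq:subgaussian} is assumed for \emph{all} test vectors without any restriction, no further integrability considerations are needed.

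Next I would check that the candidate variance proxy $ARA^*$ has the structural properties required of an $R$-subgaussian variance proxy. Self-adjointness is immediate from $(ARA^*)^* = A R^* A^* = ARA^*$ using $R = R^*$, and positivity from $\innerprod{u}{ARA^*u}_\Y = \innerprod{A^*u}{R A^*u}_\X \geq 0$ for every $u \in \Y$, using that $R$ is positive. For the trace class property I would invoke that $S_1$ forms a two-sided ideal in the bounded operators: from $R \in S_1(\X)$ and $A^* \in L(\Y,\X)$ we get $RA^* \in S_1(\Y,\X)$, and composing on the left with $A \in L(\X,\Y)$ gives $ARA^* \in S_1(\Y)$.

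The core computation is then a single line: for arbitrary $u \in \Y$ we have $\innerprod{u}{AX}_\Y = \innerprod{A^*u}{X}_\X$, so applying \eqref{eq:subgaussian} with the test vector $A^*u \in \X$ yields
\[
\log \E\big[ e^{\innerprod{u}{AX}_\Y} \big] = \log \E\big[ e^{\innerprod{A^*u}{X}_\X} \big] \leq \frac{\innerprod{A^*u}{R A^*u}_\X}{2} = \frac{\innerprod{u}{ARA^*u}_\Y}{2},
\]
which is exactly the defining inequality for $AX$ being $ARA^*$-subgaussian. I do not expect a genuine obstacle here; the only point that deserves a moment of care is the bookkeeping showing that $ARA^*$ is trace class rather than merely bounded, which is precisely where the ideal property of $S_1$ enters.
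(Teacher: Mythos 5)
Your proof is correct and follows the same route as the paper: the key step is exactly the paper's one-line argument of choosing the test vector $A^*u$ in the defining inequality \eqref{eq:subgaussian}. The additional bookkeeping you supply (centering, integrability, and that $ARA^*$ is positive, self-adjoint and trace class via the ideal property of $S_1$) is correct and simply makes explicit what the paper leaves implicit.
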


\begin{proof}
    For any $y \in \Y$, we may choose $u:= A^*y \in \X$
    in the inequality \eqref{eq:subgaussian}.
\end{proof}

From \Cref{prop:main}, we directly obtain a tail bound
for the quadratic form $X \mapsto \norm{AX}_\Y^2$.
The result directly falls in line with a variety of 
known concentration bounds for quadratic forms,
which we discuss in \Cref{sec:related_work}.

\begin{corollary}[Concentration of quadratic form]
    \label{cor:main}
    Let $A: \X \to \Y$ be a bounded linear operator
    and let $X$ be $R$-subgaussian in $\X$. Then we have
    \begin{equation}
        \label{eq:cor_main}
        \P\left[ 
        \norm{AX}_{\Y}^2 
        >
        \tr(B) +
        2 \sqrt{t} \norm{B}_{S_2(\Y)} 
        + 2 t \norm{B} 
        \right] 
    \leq e^{-t}, \quad t \geq 0
    \end{equation}
    with the trace class operator 
    $B:= A R A^*: \Y \to \Y$.
\end{corollary}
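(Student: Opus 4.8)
The plan is to combine \Cref{lem:subgaussian_linear} with \Cref{prop:main} in the obvious way. First I would invoke \Cref{lem:subgaussian_linear} with the given bounded operator $A \colon \X \to \Y$ to conclude that $AX$ is $ARA^*$-subgaussian as a random vector in $\Y$. Since $R$ is positive, self-adjoint, and trace class, the operator $B := ARA^*$ is again positive and self-adjoint on $\Y$; moreover $B$ is trace class because $S_1(\X)$ (and hence the image of trace class operators under two-sided composition with bounded operators) forms a two-sided ideal in the bounded operators, as recalled in the setting section. Concretely, $\tr(B) = \tr(ARA^*) = \tr(A^*AR) < \infty$, so $B$ satisfies all the hypotheses required of the variance proxy in \Cref{prop:main}.

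Having established that $AX$ is $B$-subgaussian in the separable Hilbert space $\Y$, I would simply apply \Cref{prop:main} to the random vector $AX$ in place of $X$ and to $B$ in place of $R$. The tail bound \eqref{eq:main} of that proposition then reads precisely
\begin{equation*}
    \P\left[ \norm{AX}_{\Y}^2 > \tr(B) + 2\sqrt{t}\,\norm{B}_{S_2(\Y)} + 2t\norm{B} \right] \leq e^{-t}, \quad t \geq 0,
\end{equation*}
which is exactly \eqref{eq:cor_main}. One could also record the accompanying cumulant-generating function bound \eqref{eq:main_cgf} for $\norm{AX}_\Y^2$ with $R$ replaced by $B$, though the statement of the corollary only asks for the tail estimate.

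There is essentially no obstacle here: the corollary is a direct specialisation of two results already proved. The only points worth a sentence of care are (i) verifying that $B = ARA^*$ is genuinely trace class and positive self-adjoint, so that \Cref{prop:main} applies verbatim, and (ii) noting that the norms appearing in the bound are now the operator, Hilbert--Schmidt, and trace norms computed on $\Y$ rather than $\X$ — these are exactly the quantities $\norm{B}$, $\norm{B}_{S_2(\Y)}$, and $\tr(B)$ in the statement. No new estimate or convergence argument is needed beyond what \Cref{prop:main} already supplies.
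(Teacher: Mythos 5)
Your proposal is correct and matches the paper's intended argument exactly: the corollary is stated as a direct consequence of \Cref{lem:subgaussian_linear} (giving that $AX$ is $ARA^*$-subgaussian in $\Y$) followed by an application of \Cref{prop:main} to $AX$ with variance proxy $B = ARA^*$. Your added checks that $B$ is positive, self-adjoint and trace class (via the ideal property of $S_1$) are exactly the routine verifications the paper leaves implicit.
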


For completeness, we give a condition that ensures
$R$-subgaussianity of a linearly transformed weakly
subgaussian random vector.
It is proven similarly to \Cref{lem:subgaussian_linear}
by noting that we have
$AA^* \in S_1(\Y)$ for every $A \in S_2(\X, \Y)$.

\begin{lemma}
    Let $A \in S_2(\X,\Y)$
    and let $X$ be $\sigma^2$-weakly subgaussian in $\X$
    for some $\sigma^2 >0$.
    Then $AX$ is $\sigma^2 AA^*$-subgaussian in $\Y$.
\end{lemma}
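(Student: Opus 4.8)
The plan is to follow the one-line argument used for \Cref{lem:subgaussian_linear} almost verbatim, the only additional work being the verification that $\sigma^2 AA^*$ is an admissible variance proxy in the sense of the $R$-subgaussianity definition, i.e.\ a positive self-adjoint trace class operator on $\Y$.

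First I would fix an arbitrary $y \in \Y$ and move the operator onto the other side of the inner product, writing $\innerprod{y}{AX}_\Y = \innerprod{A^*y}{X}_\X$. Since $X$ is $\sigma^2$-weakly subgaussian, applying its defining inequality with the test vector $u := A^*y \in \X$ gives
\[
\log \E\bigl[ e^{\innerprod{y}{AX}_\Y} \bigr]
= \log \E\bigl[ e^{\innerprod{A^*y}{X}_\X} \bigr]
\leq \frac{\sigma^2 \norm{A^*y}^2_\X}{2}
= \frac{\innerprod{y}{\sigma^2 AA^* y}_\Y}{2},
\]
where the last equality uses $\norm{A^*y}_\X^2 = \innerprod{A^*y}{A^*y}_\X = \innerprod{y}{AA^*y}_\Y$. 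As $y \in \Y$ was arbitrary, this is exactly the inequality \eqref{eq:subgaussian} with $R := \sigma^2 AA^*$.

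It then remains to check that $R = \sigma^2 AA^*$ is of the required type. Self-adjointness and positivity are immediate from $(AA^*)^* = AA^*$, from $\innerprod{y}{AA^*y}_\Y = \norm{A^*y}_\X^2 \geq 0$, and from $\sigma^2 > 0$. For the trace class property I would invoke that $A \in S_2(\X,\Y)$ forces $A^* \in S_2(\Y,\X)$, so that $AA^*$ is a composition of two Hilbert--Schmidt operators and hence lies in $S_1(\Y)$; multiplying by the scalar $\sigma^2$ is harmless. I do not expect any genuine obstacle here: the only point worth flagging is the membership $AA^* \in S_1(\Y)$, which is precisely why the hypothesis on $A$ in this lemma is Hilbert--Schmidt rather than merely bounded as in \Cref{lem:subgaussian_linear} — under mere boundedness of $A$ the same computation only yields that $AX$ is $\sigma^2 \norm{AA^*}$-weakly subgaussian, which is strictly weaker than $R$-subgaussianity in infinite dimensions.
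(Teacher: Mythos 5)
Your proof is correct and matches the paper's intended argument: the paper proves this lemma exactly as you do, by repeating the test-vector substitution $u := A^*y$ from \Cref{lem:subgaussian_linear} and noting that $A \in S_2(\X,\Y)$ guarantees $AA^* \in S_1(\Y)$, so $\sigma^2 AA^*$ is an admissible variance proxy.
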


\section{Sums of subgaussian vectors}
\label{sec:sums}

We investigate the tail behaviour of sums of $R$-subgaussian vectors.
Just like for real-valued subgaussians, the variance proxy
for sums of independent $R$-subgaussian random vectors
is obtained as the sum of the individual variance proxies.

\begin{lemma}[Independent $R$-subgaussian sums]
    \label{lem:subgaussian_independent_sum}
    Let $X_1, \dots X_n$ be independent centered random vectors in the
    separable Hilbert space $\X$ such that $X_i$ is $R_i$-subgaussian.
    Then $\sum_{i = 1}^n X_i$ is $\sum_{i=1}^n R_i$-subgaussian.
\end{lemma}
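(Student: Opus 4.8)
The plan is to reduce the statement to the definition of $R$-subgaussianity \eqref{eq:subgaussian} applied to the sum, and then to exploit independence to factorise the moment generating function. Fix an arbitrary $u \in \X$. Since $X_1, \dots, X_n$ are independent, the real-valued random variables $\innerprod{u}{X_1}_\X, \dots, \innerprod{u}{X_n}_\X$ are independent as well, and hence
\begin{equation*}
    \E\!\left[ e^{\innerprod{u}{\sum_{i=1}^n X_i}_\X} \right]
    = \E\!\left[ \prod_{i=1}^n e^{\innerprod{u}{X_i}_\X} \right]
    = \prod_{i=1}^n \E\!\left[ e^{\innerprod{u}{X_i}_\X} \right].
\end{equation*}

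Next I would take logarithms and apply the $R_i$-subgaussian bound \eqref{eq:subgaussian} to each factor, giving
\begin{equation*}
    \log \E\!\left[ e^{\innerprod{u}{\sum_{i=1}^n X_i}_\X} \right]
    = \sum_{i=1}^n \log \E\!\left[ e^{\innerprod{u}{X_i}_\X} \right]
    \leq \sum_{i=1}^n \frac{\innerprod{u}{R_i u}_\X}{2}
    = \frac{1}{2} \innerprod{u}{\Big( \sum_{i=1}^n R_i \Big) u}_\X .
\end{equation*}
Since $u \in \X$ was arbitrary, this is exactly the defining inequality for $\sum_{i=1}^n X_i$ being $\big(\sum_{i=1}^n R_i\big)$-subgaussian. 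One should also briefly note that $\sum_{i=1}^n R_i$ is again a positive self-adjoint trace class operator (finite sums of such operators stay in $S_1(\X)$, positivity and self-adjointness being preserved under addition), so the conclusion is well-posed, and that $\sum_{i=1}^n X_i$ is integrable and centered as a finite sum of integrable centered vectors.

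This proof is essentially a one-line computation, so there is no genuine obstacle; the only point requiring a moment of care is the justification that the scalar products $\innerprod{u}{X_i}_\X$ inherit independence from the vectors $X_i$ — this follows because $x \mapsto \innerprod{u}{x}_\X$ is measurable, so $\sigma(\innerprod{u}{X_i}_\X) \subseteq \sigma(X_i)$, and independence is preserved under taking sub-$\sigma$-algebras. Everything else is immediate from \eqref{eq:subgaussian} and linearity of the inner product.
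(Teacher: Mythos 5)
Your proof is correct and follows exactly the paper's argument: factorise the moment generating function of $\innerprod{u}{\sum_i X_i}_\X$ using independence, apply the $R_i$-subgaussian bound to each factor, and sum the quadratic forms. The extra remarks on measurability and on $\sum_i R_i$ remaining positive, self-adjoint and trace class are fine but not needed beyond what the paper already takes as implicit.
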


\begin{proof}
Let $S_n := \sum_{i = 1}^n X_i$. By independence and $R_i$-subgaussianity, we 
have
\begin{equation*}
    \log \E[ e^{ \innerprod{u}{S_n}_\X } ]
    = 
    \log \prod_{i=1}^n \E[ e^{ \innerprod{u}{X_i}_\X } ]
    \leq
    \sum_{i=1}^n \frac{ \innerprod{ u }{ R_i u }_\X }{2}, \quad u \in \X.
    \qedhere
\end{equation*}
\end{proof}

Together with the previous results, one obtains concentration bounds for sums
of $R$-subgaussian vectors.

\begin{example}[Hoeffding inequality]
    Let $X_1, \dots X_n$ be independent copies of some $R$-subgaussian
    random vector $X$ in the Hilbert space $\X$.
    According to \Cref{lem:subgaussian_linear} and
    and \Cref{lem:subgaussian_independent_sum}, the normalised sum
    $\sum_{i=1}^n X_i$ is $\tfrac{1}{n}R$-subgaussian.
    We can now apply the bounds obtained 
    in \Cref{sec:concentration}.
    With probability at least $1-\delta$, we have
    \begin{equation*}
        \left\Vert \frac{1}{n} \sum_{i=1}^n X_i \right\Vert_\X 
        \leq
        \frac{ \sqrt{\tr(R)} + \sqrt{ 2 \log(1/\delta) \norm{R} }}{\sqrt{n}}
        \quad \text{for all }
        \delta > 0.
    \end{equation*}
    In particular, this gives the tail estimate 
    \begin{equation*}
        \P\left[ \left\Vert \frac{1}{n} \sum_{i=1}^n  X_i \right\Vert_\X 
        \! > \epsilon \right] 
        \leq 
        \exp\left( - \frac{n \epsilon^2}{8 \norm{R}} \right)
        \quad \text{for all }
        \epsilon > 2 \sqrt{\tfrac{\tr(R)}{n}}.
    \end{equation*}
\end{example}

\section{Quadratic forms with random operators}
\label{sec:random}

We show that \Cref{prop:main} can be used straightforwardly 
to obtain bounds for the 
quadratic form induced by a \emph{random operator}
$A$, i.e., a random variable
$\omega \mapsto A(\omega) \in L(\X, \Y)$ for $\omega \in \Omega$.
As the space $L(\X, \Y)$ is nonseparable when $\X$ and $\Y$ are
infinite-dimensional, we first fix some terminology
in order to avoid technical issues concerning the measurability of $A$.
We refer the reader to \citet{BharuchaReid1972}
for the background on notions of measurability of operator-valued functions
and the theory of random operators.

We say that the operator-valued function $A$ given by $\omega \mapsto A(\omega)$
for $\omega \in \Omega$ is a \emph{random operator} in $L(\X, \Y)$ if
$A(\omega) \in L(\X, \Y)$ for all $\omega \in \Omega$ and
$Au = A(\omega)u$ is a $\Y$-valued random vector in the Bochner sense
for every $u \in \X$.
If $A$ is a random operator in $L(\X, \Y)$ and
$X$ is a random vector in $\X$, then 
$AX$ is a random vector in $\Y$ in the Bochner sense
\citep[Proposition 13]{Dinculeanu2000}.
Moreover, $\norm{A}$ is a real-valued random variable.
We say $A$ is \emph{integrable} if
$\norm{A} \in L^1(\P)$, which we write as $A \in L^1(\P; L(\X, \Y))$.
In this case, we may consider the \emph{expectation} of $A$ as
the unique operator $\E[A] \in L(\X, \Y)$ given by
\[ 
    \E[A]u := \E[Au], \quad \text{for all }u \in \X .
\]
We say that the random operator $A$ is \emph{independent} of some 
random variable $X$
if the $\sigma$-field generated by $\{Au \mid u \in \X\}$ is independent
of $X$.
We consider random operators which are almost surely
bounded in the Loewner sense.

\begin{lemma}
   \label{lem:bounded_selfadjoint_operator}
   Let $A$ be a random operator in $L(\X, \Y)$.
   Assume that there exists some fixed $C \in L(\X, \Y)$ such that
   $A^*A \preceq C^*C$ almost surely.
   Let furthermore $X$ be an integrable centered random vector in $\X$
   such that there exists a positive self-adjoint $R \in S_1(\X)$
   almost surely satisfying 
   \begin{equation}
   \label{eq:operator_condition}
   \log \E[ e^{\innerprod{u}{X}_\X} | A] \leq \frac{\innerprod{u}{Ru}_\X}{2},
   \quad u \in X.
   \end{equation}
   Then $AX$ is $CRC^*$-subgaussian in $\Y$.
\end{lemma}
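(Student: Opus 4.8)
The natural approach is to check the subgaussianity inequality \eqref{eq:subgaussian} for $AX$ one dual vector $y \in \Y$ at a time, absorbing the randomness of $A$ by conditioning on it. As a preliminary one notes that $AX$ is a well-defined, integrable, centered $\Y$-valued random vector: the assumption $A^*A \preceq C^*C$ a.s.\ forces $\norm{A} \le \norm{C}$ a.s., so $\E\norm{AX}_\Y \le \norm{C}\,\E\norm{X}_\X < \infty$; and the conditional centering $\E[X \mid A] = 0$ follows from \eqref{eq:operator_condition} (compare both sides, as functions of $t$, at $u = tv$ near $t=0$), whence $\E[AX] = \E\bigl[A\,\E[X\mid A]\bigr] = 0$.

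For the main estimate, fix $y$ and write $\innerprod{y}{AX}_\Y = \innerprod{A^*y}{X}_\X$. Conditioning on $A$ --- precisely, on the $\sigma$-field generated by $\{Au : u \in \X\}$, with respect to which $A^*y$ is measurable --- I would like to substitute the now ``frozen'' vector $u := A^*y$ into \eqref{eq:operator_condition}. Since that bound is stated only for deterministic $u$, this substitution requires a short disintegration step: using a regular conditional distribution of $X$ given $A$ (available since $\X$ is a standard Borel space), for almost every realisation $a$ of $A$ the inequality $\E[e^{\innerprod{u}{X}_\X}\mid A=a] \le e^{\innerprod{u}{Ru}_\X/2}$ holds for all $u$ in a fixed countable dense subset of $\X$, hence for all $u \in \X$ by continuity; taking $u = a^*y$ yields
\begin{equation*}
    \E\bigl[\, e^{\innerprod{y}{AX}_\Y} \,\bigm|\, A \,\bigr]
    \;\le\;
    e^{\innerprod{A^*y}{R A^*y}_\X / 2}
    \;=\;
    e^{\innerprod{y}{ARA^*y}_\Y / 2}
    \qquad \text{almost surely.}
\end{equation*}
(Equivalently, one conditions on $A$ and applies \Cref{lem:subgaussian_linear} to the then-deterministic operator.)

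The crux is the final step: converting the random conditional proxy $ARA^*$ into the deterministic operator $CRC^*$, i.e.\ deducing $ARA^* \preceq CRC^*$ a.s.\ from $A^*A \preceq C^*C$ and $R \succeq 0$. Conjugation by $R^{1/2}$ turns $A^*A \preceq C^*C$ into $(AR^{1/2})^*(AR^{1/2}) \preceq (CR^{1/2})^*(CR^{1/2})$, which dominates $AR^{1/2}$ by $CR^{1/2}$ on the \emph{domain} side, whereas $ARA^* \preceq CRC^*$ compares them on the \emph{range} side; this asymmetry is the delicate point, and indeed $A^*A \preceq C^*C$ does not by itself even imply $AA^* \preceq CC^*$, so the case $R = \idop$ already signals that the interplay with $R$ (or some compatibility of $A$, $C$, $R$, such as the simultaneous diagonalisability available in the inverse-problem application) must be exploited. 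A natural tool is the Douglas factorisation lemma: from $A^*A \preceq C^*C$ one gets $\ran(A^*)\subseteq\ran(C^*)$ and a contraction $D$ with $A^* = C^*D$, so $A = D^*C$ and $ARA^* = D^*(CRC^*)D$, reducing the claim to $D^*(CRC^*)D \preceq CRC^*$ --- and it is here that I expect the real work (and possibly an additional hypothesis) to lie. Once $ARA^* \preceq CRC^*$ is secured, the displayed conditional bound upgrades to $\E[e^{\innerprod{y}{AX}_\Y}\mid A] \le e^{\innerprod{y}{CRC^*y}_\Y/2}$ a.s.; taking expectations and using the tower property gives $\log\E[e^{\innerprod{y}{AX}_\Y}] \le \innerprod{y}{CRC^*y}_\Y/2$ for every $y \in \Y$, and since $CRC^* \in S_1(\Y)$ (because $R \in S_1(\X)$ and $C$ is bounded) this is precisely the claim that $AX$ is $CRC^*$-subgaussian.
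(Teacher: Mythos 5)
Your proposal follows the same route as the paper up to the decisive step: you condition on $A$, obtain $\E[e^{\innerprod{y}{AX}_\Y}\mid A]\le e^{\innerprod{y}{ARA^*y}_\Y/2}$ almost surely (your disintegration argument is a more careful version of what the paper does implicitly, essentially \Cref{lem:subgaussian_linear} applied conditionally), and then must pass from the random proxy $ARA^*$ to the deterministic $CRC^*$. You stop exactly there, reducing the needed inequality via Douglas factorisation to $D^*(CRC^*)D\preceq CRC^*$ for a contraction $D$ and stating that additional hypotheses may be required. So, as a proof, the proposal is incomplete: the crux, $ARA^*\preceq CRC^*$, is left unproven.

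Your suspicion about that step is, however, correct, and it exposes a genuine flaw in the paper rather than in your argument. The implication the paper invokes here (\Cref{lem:loewner3}: $A^*A\preceq C^*C$ and $R\succeq 0$ imply $ARA^*\preceq CRC^*$) is false: take $\X=\Y=\R^2$, $C=\idop_\X$, $A$ the rotation by $\pi/2$ (so $A^*A=\idop_\X=C^*C$) and $R=\diag(1,0)$; then $ARA^*=\diag(0,1)\not\preceq\diag(1,0)=CRC^*$. The error in the paper's proof of \Cref{lem:loewner3} is the assertion that $A^*A\preceq C^*C$ ``clearly implies'' $A^*\Pi_y A\preceq C^*\Pi_y C$: these are rank-one positive operators, and such a domination forces $A^*y\in\mspan\{C^*y\}$, which does not follow from $A^*A\preceq C^*C$. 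Consequently the lemma itself fails as stated: with the above $A$, $C$, $R$ and $X\sim\mathcal{N}(0,R)$ independent of the (deterministic) operator $A$, all hypotheses hold, yet $AX\sim\mathcal{N}(0,\diag(0,1))$ is not $\diag(1,0)$-subgaussian; the same phenomenon defeats even the self-adjoint special case ($A$ the coordinate swap, $C=\idop_\X$). A correct statement needs a stronger hypothesis, e.g.\ assuming directly that $ARA^*\preceq S$ almost surely for a fixed positive trace-class $S$ (or a compatibility condition such as $A$, $C$, $R$ being simultaneously diagonalisable, as you suggest); your Douglas reduction makes transparent why contractivity of $D$ alone cannot suffice, since in the counterexample $D$ is a rotation.
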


\begin{proof}
    For all $y \in \Y$ we have
    \begin{align*}
        \E[ e^{ \innerprod{ y }{ AX}_\Y }]
        =
        \E [ \E[ e^{ \innerprod{ y }{ AX}_\Y } | A] ]
        \leq \E[ e^{ \innerprod{ y }{ARA^*y}_\Y /2 }  ].
    \end{align*}
    The claim follows since
    the almost sure condition $A^*A \preceq C^*C$ implies that
    \begin{align*}
        \innerprod{ y }{ ARA^* y}_\Y  
        \leq
        \innerprod{ y}{ C R C^* y}_\Y
    \end{align*}
    almost surely for all $y \in \Y$,
    which we prove in \Cref{lem:loewner3} in the appendix.
\end{proof}

\begin{remark}[Independence]
    Condition \eqref{eq:operator_condition}
    is satisfied if $X$ is $R$-subgaussian and
    $A$ and $X$ are independent.
\end{remark}

\begin{remark}[Self-adjoint operator]
    If $A$ is a random operator in $L(\X)$
    such that $A$ is almost surely self-adjoint and
    $\norm{A} \leq c$ almost surely for some $c \geq 0$,
    then the condition $A^*A \preceq C^*C$ of 
    \Cref{lem:bounded_selfadjoint_operator}
    can be verified with $C := c\idop_\X$.
\end{remark}

We emphasise that
\Cref{lem:bounded_selfadjoint_operator} gives a direct deviation bound for the 
random bilinear form $\norm{AX}^2_\Y$ when combined with \Cref{prop:main}.


\section{Related work}
\label{sec:related_work}

Tail bounds for quadratic forms of subgaussian random variables
can be found in the literature under a variety of assumptions.
A class of results are known as versions of
the \emph{Hanson-Wright inequality}
(for a recent discussion
see e.g.\ \citealp{KlochkovZhivotovskiy2020} and the references therein),
which classically apply to finite-dimensional 
subgaussian vectors with independent components
based on their weak variance proxy. \citet{AdamczakEtAl2020} investigate
quadratic forms involving a finite number of
constant vectors in a Banach space 
with real-valued subgaussian coefficients.
A recent paper by \citet{ChenYang2021} focuses on a similar scenario
in the Hilbert space case with a trace class operator
as variance proxy---this work
is very similar to the considerations presented in our notes.
The authors investigate the deviation of 
a (not necessarily positive) 
quadratic form of a finite number of $R$-subgaussian random vectors.
In particular, for the special case
of the squared norm, \citet{ChenYang2021} obtain the estimate
\begin{equation*}
    \P\left[ \norm{ X }^2_\X - \tr(R) > \epsilon \right] \leq
    \exp \left( -
            \frac{\epsilon^2}%
            { 8 \max \{ \norm{R}^2_{S_2(\X)}, \epsilon\norm{R} \}   }
    \right),
\end{equation*}
which contains a generally suboptimal exponent.
Such a bound arises
from a \emph{subexponential} control of the cumulant-generating function
of $\norm{X}^2_\X$
\citep[e.g.][Section 2.8]{Vershynin2018},
while our bound on the cumulant-generating function
\eqref{eq:main_cgf} is of \emph{sub-gamma} type and follows from
a quite elementary proof, giving the estimate
\begin{equation*}
    \P\left[ \norm{ X }^2_\X - \tr(R) > \epsilon \right] \leq 
    \exp \left( -
        \frac{\epsilon^2}{ 4( \norm{R}^2_{S_2(\X)} + \epsilon \norm{R} ) }
        \right).
\end{equation*}
This type of Bernstein bound
is commonly obtained in the literature under the well-known
\emph{Bernstein moment condition}
\citep[Section 2.8]{BLM2013}.
The difference between those two types of bounds
is particularly relevant in cases 
where we have $\norm{R}^2_{S_2(\X)} \gg \norm{R}$.

The bounds in \Cref{prop:main} and \Cref{cor:main}
can be interpreted as a direct generalisation
of a bound for positive quadratic forms
provided by \citet{Hsu2012}, who show that  
\begin{equation}
    \label{eq:hsu}
        \P\left[ 
        \norm{AX}_{\Y}^2 
        > \sigma^2 \left(
        \tr(A^*A) +
        2 \sqrt{t} \norm{A^*A}_{S_2(\Y)} 
        + 2 t \norm{A^*A} \right)
        \right] 
    \leq e^{-t}
\end{equation} for all $t \geq 0$
whenever $\X$ and $\Y$ are finite-dimensional and
$X$ is $\sigma^2$-weakly subgaussian.
For $R$-subgaussian $\X$,
with $\sigma^2= \norm{R}$ and
$B = A R A^*$, we have
$\sigma^2 \norm{A^*A} = \sigma^2\norm{A}^2 \geq \norm{B} $.
In settings where the eigenvalues of $R$ decay fast,
we generally have
\[
    \sigma^2 \tr(A^*A) \gg \tr(B) \quad \text{as well as} \quad
\sigma^2 \norm{A^*A}_{S_2(\X)} \gg \norm{B}_{S_2(\Y)}.
\]
Therefore, \Cref{prop:main} may also give a tighter bound
in comparison to \eqref{eq:hsu} when the sharper
subgaussian variance proxy \eqref{eq:subgaussian} is available
in finite dimensions. However, with the above choice of $\sigma^2$, 
\Cref{prop:main} implies \eqref{eq:hsu}.
When $\X$ and $\Y$ are infinite-dimensional,
a direct analogue of \eqref{eq:hsu}
does not exist in this general setting; the operator $A^*A$ is generally
not trace class (unless $A$ is assumed to be Hilbert--Schmidt). 

\section{Statistical inverse problem}
\label{sec:inverse}

Our results can be readily applied to a typical setting of
regularised estimators in statistical inverse problems
(see e.g.\ \citealp{BissantzHohageMunk2007}).
We consider the inverse problem associated with the model given by
\begin{equation}
    \label{eq:inverse_problem}
    Y = Tu + \epsilon
\end{equation}
for some $u \in \X$, with the known positive self-adjoint forward operator
$T \in L(\X)$
the centred $\X$-valued noise variable $\epsilon$
which we assume to be $R$-subgaussian.
Our goal is to recover $u$ from the noisy observation
$Y$ via a \emph{spectral regularisation strategy}.
We will not discuss the details of regularisation theory here
and refer the reader to the standard literature
(see e.g.\ \citealp{EnglHankeNeubauer1996}).

We solve the inverse problem by
constructing a regularised estimator of $u$ as
\begin{equation*}
    \hat{u}_\alpha := g_\alpha(T) Y.
\end{equation*}
Here, the regularisation strategy $g_\alpha: [0, \infty) \to \R$
for a \emph{regularisation parameter} $\alpha >0$ is applied
to the operator $T$ via the spectral calculus.
The regularisation strategy $g_\alpha$ is constructed such that
$g_\alpha(T) Tu \to u$ as $\alpha \to 0$,
i.e., $g_\alpha(T)$ approximates the (generally unbounded) inverse of $T$
in a pointwise fashion for reasonable $u$ in its domain.
For specific choices of $g_\alpha$, the estimate
$\hat{u}_\alpha$ may yield a
\emph{ridge regressor} (Tikhonov--Phillips regularisation),
\emph{principal component regressor} (spectral truncation)
or \emph{gradient descent} scheme (Landweber iteration)
when \eqref{eq:inverse_problem} is interpreted
in the context of fixed design regression 
(\citealp{EnglHankeNeubauer1996}, Section 4).

The performance of the estimator $\hat{u}_\alpha$ may be measured
on a continuous scale of errors
via the parametrised term
$\norm{ T^s(\hat{u}_\alpha - u) }_\X $ for $s \in [0, 1]$,
where $s = 0$ corresponds to the classical 
\emph{reconstruction error} and
$s=1$ corresponds to the \emph{prediction error}, which is sometimes
also called the \emph{weak reconstruction error}.
We set
$
u_\alpha := \E[ \hat{u}_\alpha]
= g_\alpha(T) Tu$ and
obtain the \emph{bias-variance decomposition}
\[
    \norm{ T^s(\hat{u}_\alpha - u) }_\X 
    \leq
    \norm{ T^s(\hat{u}_\alpha - u_\alpha) }_\X 
    +
    \norm{ T^s(u_\alpha - u) }_\X .
\]
The behaviour of the deterministic second term on
the right hand side, the bias,
is covered for $\alpha \to 0$
by classical regularisation theory under smoothness assumptions
for the true solution $u$.
Our previous results allow to bound the first term on the right-hand side,
the variance, with high probability.
In fact, we see that we have 
$
\norm{ T^s(\hat{u}_\alpha - u_\alpha) }_\X 
=
\norm{ T^s g_\alpha(T) \epsilon }_\X 
$
and hence for all $\delta >0$, we obtain the bound
\begin{equation}
    \label{eq:var_bound1}
    \norm{ T^s g_\alpha(T) \epsilon }^2_\X 
    \leq \tr( B ) + 
    2 \sqrt{ \log(1/\delta) } \norm{ B }_{S_2(\X)}
    + 
    2 \log(1 /\delta ) \norm{ B }
\end{equation}
with probability at least $1 - \delta$ 
due to \Cref{cor:main} with the trace class operator
$B := T^s g_\alpha(T) R g_\alpha(T) T^s$.
The variance bound \eqref{eq:var_bound1}
combines the interplay of the forward operator $T$, the
regularisation strategy $g_\alpha$ and the noise variance proxy operator
$R$ into one expression via the operator $B$
and flexibly allows for further investigations depending
on more detailed assumptions.

\begin{example}[Noise level and regularisation schedule]
    We consider a variable noise scale in terms
    $R := \frac{\sigma^2}{n} \tilde{R}$
    for some fixed positive self-adjoint $\tilde{R} \in S_1(\X)$ with
    $\sigma^2 > 0$, $n \in \N$ and
    $\norm{\tilde{R}} = 1$, where $n$ is interpreted as a sample size.
    We consider the strong reconstruction error given by $s=0$ 
    and make use of the fact that $g_\alpha$ 
    typically satisfies $\norm{g_\alpha(T)} \leq b \alpha^{-1}$ for some
    constant $b>0$ \citep[Section 4]{EnglHankeNeubauer1996}.
    Then $g_\alpha(T) \epsilon$ is $ (\sigma b)^2 \alpha^{-2} n^{-1} \tilde{R}$-subgaussian
    and we get
    \begin{equation*}
    \norm{ g_\alpha(T) \epsilon }^2_\X 
    \leq 
    \frac{ \sigma^2 b^2}{\alpha^{2} n}  
    \left(
    \tr( \tilde{R} ) + 
    2 \sqrt{ \log(1/\delta) } \norm{ \tilde{R} }_{S_2(\X)}
    +  2 \log(1 /\delta )
    \right)
    \end{equation*}
    with probability at least $1 - \delta$. Note this 
    reflects that $\alpha = \alpha(n)$
    must classically satisfy $\alpha(n)^2 n \to \infty$ as $n\to \infty$ 
    in order to yield a consistent estimator overall.
\end{example}

\section*{Acknowledgements}

This work is supported by the Deutsche Forschungsgemeinschaft (DFG) through 
grant EXC 2046 \textit{MATH+}, Project EF1-19: 
\textit{Machine Learning Enhanced Filtering Methods for Inverse Problems}.
The authors wish to thank Vladimir Spokoiny for dicussions
of the finite-dimensional case.

\bibliography{references}

\begin{thebibliography}{24}
\providecommand{\natexlab}[1]{#1}
\providecommand{\url}[1]{\texttt{#1}}
\expandafter\ifx\csname urlstyle\endcsname\relax
  \providecommand{\doi}[1]{doi: #1}\else
  \providecommand{\doi}{doi: \begingroup \urlstyle{rm}\Url}\fi

\bibitem[Adamczak et~al.(2020)Adamczak, Lata{\l}a, and
  Meller]{AdamczakEtAl2020}
R.~Adamczak, R.~Lata{\l}a, and R.~Meller.
\newblock Hanson-{Wright} inequality in {Banach} spaces.
\newblock \emph{Annales de l'Institut Henri Poincar{\'e}. Probabilit{\'e}s et
  Statistiques}, 56\penalty0 (4):\penalty0 2356--2376, 2020.

\bibitem[Bharucha-Reid(1972)]{BharuchaReid1972}
A.~T. Bharucha-Reid.
\newblock \emph{Random integral equations}.
\newblock Elsevier, 1972.

\bibitem[Bissantz et~al.(2007)Bissantz, Hohage, Munk, and
  Ruymgaart]{BissantzHohageMunk2007}
N.~Bissantz, T.~Hohage, A.~Munk, and F.~Ruymgaart.
\newblock Convergence rates of general regularization methods for statistical
  inverse problems and applications.
\newblock \emph{SIAM Journal on Numerical Analysis}, 45\penalty0 (6):\penalty0
  2610--2636, 2007.

\bibitem[Boucheron et~al.(2013)Boucheron, Lugosi, and Massart]{BLM2013}
S.~Boucheron, G.~Lugosi, and P.~Massart.
\newblock \emph{Concentration inequalities. {A} nonasymptotic theory of
  independence}.
\newblock Oxford University Press, 2013.

\bibitem[Bousquet(2002)]{Bousquet2002}
O.~Bousquet.
\newblock A {Bennett} concentration inequality and its application to suprema
  of empirical processes.
\newblock \emph{Comptes Rendus. Math{\'e}matique. Acad{\'e}mie des Sciences,
  Paris}, 334\penalty0 (6):\penalty0 495--500, 2002.

\bibitem[Buldygin and Kozachenko(2000)]{BuldyginKozachenko2000}
V.~V. Buldygin and Y.~V. Kozachenko.
\newblock \emph{Metric characterization of random variables and random
  processes}, volume 188.
\newblock American Mathematical Society, 2000.

\bibitem[Chen and Yang(2021)]{ChenYang2021}
X.~Chen and Y.~Yang.
\newblock Hanson-{Wright} inequality in {Hilbert} spaces with application to
  {K}-means clustering for non-{Euclidean} data.
\newblock \emph{Bernoulli}, 27\penalty0 (1):\penalty0 586--614, 2021.

\bibitem[Dinculeanu(2000)]{Dinculeanu2000}
N.~Dinculeanu.
\newblock \emph{Vector integration and stochastic integration in {Banach}
  spaces}.
\newblock Wiley, 2000.

\bibitem[Engl et~al.(1996)Engl, Hanke, and Neubauer]{EnglHankeNeubauer1996}
H.~W. Engl, M.~Hanke, and A.~Neubauer.
\newblock \emph{Regularization of {Inverse} {Problems}}, volume 375 of
  \emph{Mathematics and its Applications}.
\newblock Kluwer Academic Publishers Group, 1996.

\bibitem[Fukuda(1990)]{Fukuda1990}
R.~Fukuda.
\newblock Exponential integrability of sub-{Gaussian} vectors.
\newblock \emph{Probability Theory and Related Fields}, 85\penalty0
  (4):\penalty0 505--521, 1990.

\bibitem[Giorgobiani et~al.(2020)Giorgobiani, Kvaratskhelia, and
  Tarieladze]{GiorgobianiEtAl2020}
G.~Giorgobiani, V.~Kvaratskhelia, and V.~Tarieladze.
\newblock Notes on sub-{Gaussian} random elements.
\newblock In \emph{Applications of Mathematics and Informatics in Natural
  Sciences and Engineering, AMINSE 2019. Springer Proceedings in Mathematics \&
  Statistics}, volume 334, pages 197--203. Springer, 2020.

\bibitem[Giuliano~Antonini(1997)]{Antonini1997}
R.~Giuliano~Antonini.
\newblock {Subgaussian} random variables in {Hilbert} spaces.
\newblock \emph{Rendiconti del Seminario Matematico della Universit{\`a} di
  Padova}, 98:\penalty0 89--99, 1997.

\bibitem[Hackbusch(1995)]{Hackbusch1995}
W.~Hackbusch.
\newblock \emph{Integral equations. {Theory} and numerical treatment}.
\newblock Birkh{\"a}user, 1995.

\bibitem[Hsu et~al.(2012)Hsu, Kakade, and Zhang]{Hsu2012}
D.~Hsu, S.~Kakade, and T.~Zhang.
\newblock A tail inequality for quadratic forms of subgaussian random vectors.
\newblock \emph{Electronic Communications in Probability}, 17\penalty0
  (52):\penalty0 1--6, 2012.

\bibitem[Klochkov and Zhivotovskiy(2020)]{KlochkovZhivotovskiy2020}
Y.~Klochkov and N.~Zhivotovskiy.
\newblock Uniform {Hanson}-{Wright} type concentration inequalities for
  unbounded entries via the entropy method.
\newblock \emph{Electronic Journal of Probability}, 25:\penalty0 30, 2020.

\bibitem[Laurent and Massart(2000)]{LaurentMassart2000}
B.~Laurent and P.~Massart.
\newblock {Adaptive estimation of a quadratic functional by model selection}.
\newblock \emph{The Annals of Statistics}, 28\penalty0 (5):\penalty0 1302 --
  1338, 2000.

\bibitem[Ledoux and Talagrand(1991)]{LedouxTalagrand1991}
M.~Ledoux and M.~Talagrand.
\newblock \emph{Probability in {Banach} spaces. {Isoperimetry} and processes}.
\newblock Springer, 1991.

\bibitem[Maurer and Pontil(2021)]{MaurerPontil2021}
A.~Maurer and M.~Pontil.
\newblock Concentration inequalities under sub-{Gaussian} and sub-exponential
  conditions.
\newblock \emph{Advances in Neural Information Processing Systems},
  34:\penalty0 7588--7597, 2021.

\bibitem[Pinelis(1994)]{Pinelis1994}
I.~Pinelis.
\newblock Optimum bounds for the distributions of martingales in {Banach}
  spaces.
\newblock \emph{The Annals of Probability}, 22\penalty0 (4):\penalty0
  1679--1706, 1994.

\bibitem[Simon(2005)]{Simon2005}
B.~Simon.
\newblock \emph{Trace ideals and their applications}.
\newblock American Mathematical Society, 2nd edition, 2005.

\bibitem[Vakhania et~al.(1987)Vakhania, Tarieladze, and
  Chobanyan]{VakhaniaEtAl1987}
N.~Vakhania, V.~Tarieladze, and S.~Chobanyan.
\newblock \emph{Probability distributions on {Banach} spaces. {Transl}. from
  the {Russian} by {Wojbor} {A}. {Woyczynski}}, volume~14 of \emph{Mathematics
  and Its Applications. Soviet Series}.
\newblock D. Reidel Publishing Company, 1987.

\bibitem[Vershynin(2018)]{Vershynin2018}
R.~Vershynin.
\newblock \emph{High-dimensional probability. {An} introduction with
  applications in data science}.
\newblock Cambridge University Press, 2018.
\newblock \doi{10.1017/9781108231596}.

\bibitem[Weidmann(1980)]{Weidmann1980}
J.~Weidmann.
\newblock \emph{Linear operators in {Hilbert} spaces}.
\newblock Springer, 1980.

\bibitem[Yurinsky(1995)]{Yurinsky1995}
V.~V. Yurinsky.
\newblock \emph{Sums and {Gaussian} vectors}, volume 1617 of \emph{Lecture
  Notes in Mathematics}.
\newblock Springer, 1995.

\end{thebibliography}

\appendix

\section{Proof of \Cref{prop:main}}
\label{sec:proof}


We first assume that the space $\X$ is finite-dimensional,
which allows us to use a standard Gaussian majorisation.
When the dimension of $\X$ is finite, there exists a
an isotropic $\X$-valued Gaussian vector $\xi \sim \mathcal{N}(0, \idop_\X)$.
We may assume $\xi$ is independent of $X$.
Then since $\log \E[ e^{\innerprod{u}{\xi}_\X} ] = \norm{u}_\X^2/2$ 
for all $u \in \X$, we have
\begin{equation}
    \label{eq:mgf_bound1}
    \begin{split}
        \E[ e^{ \lambda \norm{X}_\X^2 } ] 
        = 
        \E_\xi[ \E_X[ e^{ \sqrt{ 2\lambda} \innerprod{\xi}{X}_\X } ] ]
        \leq
        \E[ e^{\lambda \innerprod{\xi}{ R \xi }_\X }  ]
    \end{split}
\end{equation}
for all $\lambda \geq 0$, where we apply Fubini's theorem.


As the quadratic form 
$\innerprod{\xi}{ R \xi }_\X = \norm{ R^{1/2} \xi}^2_\X$
is invariant under unitary transformations of $R^{1/2} \xi$, we can apply 
the typical diagonalisation argument outlined for example
by \citet[Example 2.12]{BLM2013}
to the right-hand side of \eqref{eq:mgf_bound1}. 
In particular, \Cref{lem:laurent} applied
to the random variable $R^{1/2}\xi \sim \mathcal{N}(0, R)$
immediately gives
\begin{equation}
    \label{eq:mgf_bound2}
    \log \E[ e^{\lambda \innerprod{\xi}{R\xi}_\X } ]
    \leq 
    \lambda \tr( R ) + \frac{ \lambda^2\norm{R}_{S_2(\X)}^2 }{ 1 - 2 \lambda \norm{ R }},
    \quad
    0 \leq \lambda < 1/2\norm{R}.
\end{equation}


We now assume that $\X$ is infinite-dimensional and
repeat a monotone convergence 
argument by \citet[Lemma 2.4.1]{Yurinsky1995}.
We consider an orthonormal basis $(e_i)_{i \in \N}$ of $\X$
and for $d \in \N$ let $\Pi_d: \X \to \X$ denote the
orthogonal projector onto the $d$-dimensional 
subspace $\X_d := \mspan\{e_1, \dots e_d \} \subset \X$.

We fix the finite-rank operator $R_d :=  \Pi_d R$.
It is easy to show that we have the monotone
convergence
$\norm{ \Pi_d X }^2_\X \to \norm{ X }^2_{\X}$
as $d \to \infty$ by Parseval's identity.
Similarly, we have the monotone convergence
$\tr(R_d) = \sum_{i=1}^d \gamma_i \to \tr(R) $
and
$\norm{ R_d }^2_{S_2(\X)} = \sum_{i=1}^d \gamma_i^2 \to \norm{R}_{S_2(\X)}^2$
due to the invariance of the trace and Hilbert--Schmidt norm
under the choice of orthonormal basis $(e_i)_{i \in \N}$
by considering the eigenvectors of $R$.
Here, $(\gamma_i)_{i \in \N} \in \ell^1(\N)$ denotes the sequence
of nonnegative eigenvalues of $R$
(see e.g.\ \citealp{Weidmann1980}, Section 7.1).
Finally, we have the monotone convergence
$\norm{ R_d } \to \norm{ R}$ as shown by
\citet[Lemma 4.3.8] {Hackbusch1995},
where the monotonicity follows from
expanding the definition of the 
operator norm in terms of Parseval's identity.

We now note that $\Pi_d X$ is $R_d$-subgaussian.
Applying first the monotone convergence theorem
and then the finite-dimensional 
bound shown in \eqref{eq:mgf_bound2}, we obtain
\begin{align}
    \log \E[ e^{\lambda \norm{X}^2_\X } ]
    &= \lim_{d \to \infty}
    \log \E[ e^{\lambda \norm{\Pi_d X}^2_\X } ] \nonumber \\
    &\leq 
    \lim_{d \to \infty}
    \lambda \tr( R_d ) + \frac{ \lambda^2\norm{R_d}_{S_2(\X)}^2 }{ 1 - 2 \lambda \norm{ R_d }}
    \nonumber \\
    &=
    \lambda \tr( R ) + \frac{ \lambda^2\norm{R}_{S_2(\X)}^2 }{ 1 - 2 \lambda \norm{ R }},
    \quad
    0 \leq \lambda < 1/2\norm{R}, \label{eq:mgf_bound3}
\end{align}
which constitutes the infinite-dimensional version of \eqref{eq:mgf_bound2}.

The final probability bound is obtained from the
Chernoff bound for the random variable $\norm{X}^2_\X - \tr(R)$ based on
\eqref{eq:mgf_bound3} as shown by \citet[Section 2.4]{BLM2013}.
\hfill\qedsymbol

\subsection{Loewner partial order and trace}

We collect some general properties of linear operators
used throughout the main text.
The first property generaliseses the cyclic invariance of the trace.
It is standard for two 
Hilbert--Schmidt operators acting on a single Hilbert space,
but not for a trace class operator and a bounded operator
acting between two distinct spaces;
hence we include it for completeness. 

Let $\X$ and $\Y$ be separable Hilbert spaces.

\begin{lemma}
    \label{lem:cyclic}
    Let $A \in S_1(\X, \Y)$ and $B \in L(\Y, \X)$.
    We have
    \[
        \tr( B A ) = \tr( A B ).
    \]
\end{lemma}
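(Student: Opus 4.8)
The plan is to reduce both traces to the same absolutely convergent scalar series via the singular value decomposition of $A$. First I would record that, since $S_1(\X,\Y)$, $S_1(\X)$ and $S_1(\Y)$ are ideals, the products $BA \in S_1(\X)$ and $AB \in S_1(\Y)$ are trace class, so both sides are well-defined. Being compact, $A$ admits a singular value decomposition $A = \sum_i s_i \innerprod{\cdot}{u_i}_\X v_i$, where $(u_i)$ and $(v_i)$ are orthonormal systems in $\X$ and $\Y$ respectively and $(s_i) \in \ell^1(\N)$ with $\sum_i s_i = \norm{A}_{S_1(\X,\Y)}$.

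Next I would compute $\tr(BA)$ from the basis-independent formula $\tr(T) = \sum_k \innerprod{Te_k}{e_k}_\X$ for an arbitrary orthonormal basis $(e_k)$ of $\X$. Substituting the decomposition gives $\innerprod{BAe_k}{e_k}_\X = \sum_i s_i \innerprod{e_k}{u_i}_\X \innerprod{Bv_i}{e_k}_\X$, and I would then interchange the sums over $i$ and $k$. This interchange is the one point requiring care, because $(Bv_i)$ is in general not an orthonormal system, so one cannot simply read the trace off the expansion of $BA$ as a series of rank-one operators; however, absolute convergence holds since by Cauchy--Schwarz and Parseval $\sum_{i,k} s_i |\innerprod{e_k}{u_i}_\X|\,|\innerprod{Bv_i}{e_k}_\X| \le \sum_i s_i \norm{u_i}_\X \norm{Bv_i}_\Y \le \norm{B} \sum_i s_i < \infty$. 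After the interchange, Parseval's identity collapses the $k$-sum and yields $\tr(BA) = \sum_i s_i \innerprod{Bv_i}{u_i}_\X$.

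Then I would run the symmetric computation for $\tr(AB)$: from the decomposition one has $ABy = \sum_i s_i \innerprod{y}{B^*u_i}_\Y v_i$, and the same argument with an orthonormal basis of $\Y$ and the estimate $\norm{B^*u_i}_\Y \le \norm{B}$ gives $\tr(AB) = \sum_i s_i \innerprod{v_i}{B^*u_i}_\Y = \sum_i s_i \innerprod{Bv_i}{u_i}_\X$, which is exactly the series obtained for $\tr(BA)$. A slicker alternative that avoids the double-series bookkeeping is to approximate $A$ in the trace norm by the finite-rank truncations $A_n := \sum_{i \le n} s_i \innerprod{\cdot}{u_i}_\X v_i$, note that $\tr(BA_n) = \tr(A_nB)$ holds elementarily for finite-rank operators (it reduces to cyclicity of the matrix trace), and pass to the limit using the continuity estimates $|\tr(BT)| \le \norm{B}\,\norm{T}_{S_1(\X)}$ and $|\tr(TB)| \le \norm{B}\,\norm{T}_{S_1(\X,\Y)}$. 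In either approach the only genuine obstacle is this convergence/continuity bookkeeping; the algebraic core is just the rank-one identity $\tr(\innerprod{\cdot}{f}g) = \innerprod{g}{f}$.
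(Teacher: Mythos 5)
Your argument is correct and matches the paper's approach: the paper simply cites the singular-value-decomposition proof of \citet[Theorem 3.1]{Simon2005} and notes it carries over to two distinct spaces, which is exactly the SVD computation (reducing both traces to the series $\sum_i s_i \innerprod{Bv_i}{u_i}_\X$) that you carry out in detail, with the convergence bookkeeping handled properly.
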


A proof based on the singular value decomposition of $A$ is provided by
\citet[Theorem 3.1]{Simon2005}
for the case that $\X = \Y$ but also works 
in the general setting presented here.

We show a trace inequality induced by the Loewner partial order.

\begin{lemma}
    \label{lem:loewner1}
    Let $A \in L(\X)$ self-adjoint and $C, R \in S_1(\X)$
    self-adjoint.
    If $0 \preceq A$ and $C \preceq R$, then we have
    \[
        \tr(AC) \leq \tr( AR )
    \]
    The same conclusion holds under the assumption
    $A \in S_1(\X)$ and $C, R \in L(\X)$.
\end{lemma}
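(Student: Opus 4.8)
The plan is to reduce everything to the single elementary claim that $\tr(PQ) \geq 0$ whenever $P,Q \in L(\X)$ are positive self-adjoint and at least one of them lies in $S_1(\X)$. Granting this, set $D := R - C$. The hypothesis $C \preceq R$ gives $D \succeq 0$, and $D \in S_1(\X)$ in the first case (a difference of trace class operators), $D \in L(\X)$ in the second. In either case exactly one of $A$ and $D$ is trace class, both are positive, and all of $AC$, $AR$, $AD$ are trace class since a product of a bounded operator and a trace class operator is trace class. By linearity of the trace, $\tr(AR) - \tr(AC) = \tr(A(R-C)) = \tr(AD)$, so the asserted inequality is exactly the claim applied to $P = A$, $Q = D$ (or to $P = D$, $Q = A$, using $\tr(AD) = \tr(DA)$ from \Cref{lem:cyclic} in the second case, so that the trace class factor always plays the role of $Q$).

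To prove the claim, assume without loss of generality that $Q \in S_1(\X)$; it is then compact and positive self-adjoint, so by the spectral theorem there is an orthonormal basis $(f_i)_{i \in \N}$ of $\X$ with $Q f_i = \mu_i f_i$ and $\mu_i \geq 0$. Since $PQ \in S_1(\X)$, its trace may be evaluated in this basis, and one computes
\[
    \tr(PQ) = \sum_{i \in \N} \innerprod{f_i}{P Q f_i}_\X = \sum_{i \in \N} \mu_i \innerprod{f_i}{P f_i}_\X .
\]
Each summand is nonnegative because $\mu_i \geq 0$ and $P \succeq 0$ forces $\innerprod{f_i}{P f_i}_\X \geq 0$; hence $\tr(PQ) \geq 0$, which completes the argument.

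The proof is essentially self-contained. The only nonroutine input is \Cref{lem:cyclic}, needed to pass between $\tr(AD)$ and $\tr(DA)$ in the second case; and one should note that the absolute convergence of the displayed series (guaranteed by $PQ \in S_1(\X)$) is precisely what legitimises computing the trace in the eigenbasis of $Q$ rather than in an arbitrary one. I do not anticipate a genuine obstacle beyond this bookkeeping.
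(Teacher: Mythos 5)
Your proof is correct, but it is organised differently from the paper's. The paper diagonalises $A$: it takes an orthonormal eigenbasis of $A$ with nonnegative eigenvalues $\lambda_i$ and compares $\sum_i \lambda_i \innerprod{e_i}{Ce_i}_\X$ with $\sum_i \lambda_i \innerprod{e_i}{Re_i}_\X$ termwise using $C \preceq R$. You instead subtract, setting $D = R - C \succeq 0$, reduce the statement to the single claim $\tr(PQ) \geq 0$ for positive self-adjoint $P,Q$ with one of them trace class, and then diagonalise the \emph{trace class} (hence compact) factor, invoking \Cref{lem:cyclic} in the second case to put that factor in the right position. This choice of which operator to diagonalise is not merely cosmetic: in the first case of the lemma $A$ is only a bounded positive self-adjoint operator and need not possess an orthonormal eigenbasis at all (it may have purely continuous spectrum), so the paper's appeal to ``the spectral theorem'' is literally valid only in the second case, where $A \in S_1(\X)$ is compact; your version always diagonalises the compact operator and is therefore rigorous in both cases without further argument. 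The small price you pay is the extra bookkeeping (ideal property of $S_1(\X)$, linearity and basis-independence of the trace, cyclicity), all of which you handle correctly.
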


\begin{proof}
    Let $(e_i)_{i \in I}$ an orthonormal basis of $\X$ such that
    $A e_i = \lambda_i e_i$ for $\lambda_i \in [0, \infty)$ by
    the spectral theorem for bounded self-adjoint operators.
    We have
    \begin{align*}
        \tr(AC) 
        &= \sum_{i \in I} \innerprod{ e_i}{ AC e_i}_\X 
        = \sum_{i \in I} \lambda_i \innerprod{ e_i}{ C e_i} \\
        &\leq \sum_{i \in I} \lambda_i \innerprod{ e_i}{ R e_i}
        = \sum_{i \in I} \innerprod{ e_i}{ AR e_i}_\X
        = \tr(AR).
    \end{align*}
\end{proof}

The following fact is standard.

\begin{lemma}
    \label{lem:loewner2}
    Let $A \in L(\X, \Y)$. For self-adjoint operators
    $C, R \in S_1(\X)$ such that $C \preceq R$, we have
    $
        A C A^* \preceq A R A^*.
    $
\end{lemma}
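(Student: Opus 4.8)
The plan is to unwind both sides of the Loewner inequality against an arbitrary test vector in $\Y$ and reduce everything to the defining inequality $C \preceq R$ in $\X$. Concretely, fix $y \in \Y$. Since $A^* \in L(\Y, \X)$ is the adjoint of $A$, we have the identity
\[
    \innerprod{y}{ACA^*y}_\Y = \innerprod{A^*y}{C A^*y}_\X,
\]
and likewise $\innerprod{y}{ARA^*y}_\Y = \innerprod{A^*y}{R A^*y}_\X$. So the claim amounts to comparing $\innerprod{u}{Cu}_\X$ with $\innerprod{u}{Ru}_\X$ for the single vector $u := A^*y \in \X$.

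Next, I would simply invoke the hypothesis $C \preceq R$, which by definition means $\innerprod{u}{(R-C)u}_\X \geq 0$ for every $u \in \X$, in particular for $u = A^*y$. Combining this with the two identities above yields
\[
    \innerprod{y}{ACA^*y}_\Y = \innerprod{A^*y}{CA^*y}_\X \leq \innerprod{A^*y}{RA^*y}_\X = \innerprod{y}{ARA^*y}_\Y.
\]
Since $y \in \Y$ was arbitrary and $ACA^*$, $ARA^*$ are self-adjoint on $\Y$, this is precisely the statement $ACA^* \preceq ARA^*$.

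There is essentially no obstacle here: the argument is a one-line manipulation using only the definition of the adjoint and of the Loewner order, and the trace class assumptions on $C$ and $R$ play no role (they are inherited from the surrounding context and guarantee, e.g., that $ACA^*$, $ARA^*$ are well-defined self-adjoint operators, but the inequality itself holds for any bounded self-adjoint $C \preceq R$). The only point worth a moment's care is to keep the adjoints straight, namely that one tests with $u = A^*y$ rather than $Ay$; this is automatic once one writes $\innerprod{y}{ACA^*y}_\Y = \innerprod{A^*y}{C(A^*y)}_\X$.
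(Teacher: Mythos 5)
Your argument is correct and is exactly the paper's proof, just written out: the paper's one-liner $0 \preceq A(R-C)A^* = ARA^* - ACA^*$ is precisely your computation of testing with $y \in \Y$ and applying $C \preceq R$ to $u = A^*y$. Your remark that the trace class assumption is not needed for the inequality itself is also accurate.
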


\begin{proof}
    We see that $ 0 \preceq A(R-C)A^* = A R A^* - A C A^* $.
\end{proof}

\begin{remark}
In particular, the above implies $\tr(A C A^*) \leq \tr(A R A^*)$.
\end{remark}

%

For a self-adjoint operator $A \in L(\X)$, it is sometimes
convenient to use the elementary identity
\begin{equation}
    \label{eq:hs_innerprod}
    \innerprod{u}{Au}_\X
    = 
    \innerprod{ A }{ \Pi_u}_{S_2(\X)}
    =
    \tr( A \Pi_u ) = \tr(\Pi_uA)
\end{equation}
for all $u \in \X$, where $\Pi_u$ denotes the orthogal 
projector onto the one-dimensional subspace of $\X$ spanned
by $u$. 

\begin{lemma}
    \label{lem:loewner3}
    Let $A,C \in L(\X, \Y)$ such that
    $A^* A \preceq C^* C$ and $R \in L(\X)$ self-adjoint such that $0 \preceq R$.
    Then $ A R A^* \preceq C R C^*$.
\end{lemma}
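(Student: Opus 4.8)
The plan is to reduce the asserted operator inequality to a scalar comparison and then apply the factorization theorem of Douglas. Testing against an arbitrary $y \in \Y$, the claim $ARA^* \preceq CRC^*$ is equivalent to
\[
    \innerprod{A^* y}{R\, A^* y}_\X \;\le\; \innerprod{C^* y}{R\, C^* y}_\X
    \qquad \text{for all } y \in \Y,
\]
that is, to comparing the quadratic form of the positive operator $R$ at the two vectors $A^* y$ and $C^* y$. Writing the hypothesis as $(A^*)(A^*)^{*} = A^*A \preceq C^*C = (C^*)(C^*)^{*}$, Douglas' theorem applied to the pair $A^*, C^*$ produces a bounded operator $E \in L(\Y)$ with $A^* = C^* E$ and $\norm{E} \le 1$; equivalently $A = E^* C$. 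Taking the minimal such factor one may moreover arrange $\ran E \subseteq \overline{\ran C}$, a property I expect to be needed below.

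Substituting $A = E^* C$ gives $A R A^* = E^*(C R C^*) E$, so, writing $M := C R C^* \succeq 0$, the claim becomes $E^* M E \preceq M$: conjugation by the contraction $E$ must not increase $M$ in the Loewner order. Equivalently, setting $z := E y$, so that $\norm{z} \le \norm{y}$, one must show $\innerprod{z}{M z}_\Y \le \innerprod{y}{M y}_\Y$. At the level of traces this much is automatic: by \Cref{lem:cyclic} together with \Cref{lem:loewner1} applied to the positive operator $R$ and the hypothesis $A^*A \preceq C^*C$ (valid when $R$ is trace class, as in the applications),
\[
    \tr(E^* M E) = \tr(A R A^*) = \tr(R\, A^* A)
    \;\le\; \tr(R\, C^* C) = \tr(M).
\]

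The step $E^* M E \preceq M$ is the \emph{main obstacle}, and is where I expect the real work to lie. Contractivity $\norm{E} \le 1$ alone does not suffice, since conjugating a positive operator by a general contraction may strictly increase it in the Loewner order; the argument must instead use the special structure, namely that $M = C R C^*$ is built from $R \succeq 0$ and that the Douglas factor $E$ is compatible with $\ran C$ --- informally, $E^* M E = (E^* C) R (E^* C)^* = A R A^*$ only tests vectors lying in $\ran C$. My plan would be to transport the inequality back through $C$ and reduce it to positivity of $R$; upgrading the trace-level estimate above to the full operator inequality is the delicate point, and I expect the structural hypotheses, rather than a soft operator-norm bound, to be exactly what is required.
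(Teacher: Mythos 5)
Your reduction is fine as far as it goes: Douglas' factorisation gives $A = E^*C$ with $\norm{E}\leq 1$, the claim becomes $E^*ME \preceq M$ for $M := CRC^* \succeq 0$, and your trace-level estimate is correct. But the proposal stops exactly at the step you flag as the ``main obstacle'', so as a proof it is incomplete --- and, in fact, your suspicion that no soft argument (and no amount of structural massaging) will close it is right in the strongest sense: the step cannot be closed, because the statement is false as written. Take $\X = \Y = \R^2$,
\begin{equation*}
    C = \idop, \qquad
    A = \begin{pmatrix} 0 & 1 \\ 1 & 0 \end{pmatrix}, \qquad
    R = \begin{pmatrix} 1 & 0 \\ 0 & 0 \end{pmatrix},
\end{equation*}
so that $A^*A = C^*C = \idop$ and $0 \preceq R$. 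Then $ARA^* = \diag(0,1)$ while $CRC^* = \diag(1,0)$, and testing with $y = e_2$ gives $\innerprod{e_2}{ARA^*e_2}_\Y = 1 > 0 = \innerprod{e_2}{CRC^*e_2}_\Y$. Congruence preserves the Loewner order when the outer factor is fixed and the middle operators are compared (\Cref{lem:loewner2}), but not when the middle factor $R$ is fixed and the outer factors are compared through $A^*A \preceq C^*C$: that hypothesis controls $\norm{Au}_\Y$, not the directions of $A^*y$ that the quadratic form of $R$ sees.

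For comparison, the paper's own proof writes $\innerprod{y}{ARA^*y}_\Y = \tr(R^{1/2}A^*\Pi_y A R^{1/2})$ and then asserts that $A^*A \preceq C^*C$ ``clearly implies'' $A^*\Pi_y A \preceq C^*\Pi_y C$; the same example with $y = e_2$ refutes that implication, so the published argument hides precisely the hole you refused to paper over. What does survive is your trace inequality $\tr(ARA^*) \leq \tr(CRC^*)$, and the operator inequality does hold under additional structure (for instance if the Douglas factor commutes with $CRC^*$, or trivially if $R$ is a multiple of the identity --- unavailable here since $R$ is trace class). Note also that the counterexample has $A$ self-adjoint with $\norm{A}\leq 1$ and $C = c\,\idop$, so the downstream use in \Cref{lem:bounded_selfadjoint_operator} and its remark is affected as well and would need a stronger hypothesis (e.g.\ assuming $ARA^* \preceq CRC^*$ almost surely, or almost sure commutation of $A$ with $R$). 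In short: your attempt is not a complete proof, but the gap you isolated is a defect of the statement itself rather than of your approach.
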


\begin{proof}
    For every $Y \in \Y$, we have
    \begin{align*}
        \innerprod{y}{A R A^* y}_\Y
        &=
        \tr( ARA^* \Pi_y )
        =
        \tr( R^{1/2} A^* \Pi_y A R^{1/2} ) &&& \text{(by \Cref{lem:cyclic})}  \\
        &\leq
        \tr( R^{1/2} C^* \Pi_y C R^{1/2} ) = \tr( C R C^* \Pi_y)
        &&& \text{(by \Cref{lem:loewner2})} \\
        &= \innerprod{y}{CRC^*y}_\Y,
    \end{align*}
    where we use that $A^* A \preceq C^* C$ clearly implies
    $A^* \Pi_y A \preceq C^* \Pi_y C$.
\end{proof}

\end{document}